\RequirePackage{plautopatch}
\documentclass[11pt]{amsart}

\usepackage[a4paper,margin=28mm]{geometry}
\usepackage{amsmath,amssymb,amsthm,mathtools}
\usepackage{microtype,graphicx,scalerel,xcolor}
\usepackage[pdfusetitle]{hyperref}

\hypersetup{
    colorlinks=true,
}

\usepackage{tikz}
\usetikzlibrary{positioning}
\usetikzlibrary{svg.path}
\definecolor{orcid_color}{HTML}{A6CE39}

\hypersetup{pdfborder={0 0 0}} 
\DeclareRobustCommand{\orcidicon}{%
	\raisebox{.2mm}{\scalerel*{%
	\begin{tikzpicture}[xscale=1,yscale=-1,transform shape]
	\filldraw[color=orcid_color] svg {M256,128c0,70.7-57.3,128-128,128C57.3,256,0,198.7,0,128C0,57.3,57.3,0,128,0C198.7,0,256,57.3,256,128z};
	\filldraw[color=white] svg {M86.3,186.2H70.9V79.1h15.4v48.4V186.2z} svg {M108.9,79.1h41.6c39.6,0,57,28.3,57,53.6c0,27.5-21.5,53.6-56.8,53.6h-41.8V79.1z M124.3,172.4h24.5
		c34.9,0,42.9-26.5,42.9-39.7c0-21.5-13.7-39.7-43.7-39.7h-23.7V172.4z} svg {M88.7,56.8c0,5.5-4.5,10.1-10.1,10.1c-5.6,0-10.1-4.6-10.1-10.1c0-5.6,4.5-10.1,10.1-10.1
		C84.2,46.7,88.7,51.3,88.7,56.8z};
	\end{tikzpicture}}{|}}%
}
\newcommand{\orcid}[1]{\href{https://orcid.org/#1}{\orcidicon}}

\def\E{\mathbb E}
\def\P{\mathbb P}
\def\AA{\mathcal A}
\def\FF{\mathcal F}
\def\GG{\mathcal G}

\newtheorem{theorem}{Theorem}
\newtheorem{lemma}{Lemma}
\newtheorem{proposition}[lemma]{Proposition}

\title{A product inequality and its application to cross-intersecting families} 
\author[Toshihiro Shimizu]{Toshihiro Shimizu\,\orcid{0009-0009-1379-0148}}
\address{Fujitsu Research, Fujitsu Limited, Kawasaki 211-8588, Japan}
\email{t.shimizu\_2@fujitsu.com}
\author[Norihide Tokushige]{Norihide Tokushige\,\orcid{0000-0002-9487-7545}}
\address{College of Education, University of the Ryukyus, Nishihara  903-0213, Japan}
\email{hide@cs.u-ryukyu.ac.jp}
\urladdr{https://n-tokushige.github.io/x/}

\begin{document}
\begin{abstract}
For an integer $r\geq 3$ and real numbers $a_1,\ldots,a_r\in(0,1)$,
let $a=a_1\cdots a_r$. We show that
\[
\prod_{i=1}^r(a_i+a_i^2+\cdots+a_i^r)\geq
\prod_{i=1}^r(a_i+a_i^2+\cdots+a_i^{r-1}+a).
\]
This inequality enables us to bound the measure of $r$-cross $t$-intersecting families.
\end{abstract}
\maketitle

\section{Introduction}
In this paper, we settle Conjecture~1 of \cite{KKT} and prove the
following inequality. 
\begin{theorem}\label{thm:1}
For an integer $r\geq 3$ and real numbers $a_1,\ldots,a_r\in(0,1)$,
let $a=a_1\cdots a_r$. Then,
\[
\prod_{i=1}^r(a_i+a_i^2+\cdots+a_i^r)\geq
\prod_{i=1}^r(a_i+a_i^2+\cdots+a_i^{r-1}+a).
\]
Moreover, equality holds if and only if $a_1=\cdots=a_r$.
\end{theorem}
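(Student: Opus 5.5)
Write $S(x)=x+x^2+\cdots+x^{r-1}$, so the inequality reads $\prod_i (S(a_i)+a_i^r)\ge \prod_i (S(a_i)+a)$. I would take logarithms and substitute $a_i=e^{-x_i}$ with $x_i>0$. Setting $s=\sum_i x_i$, we have $a=e^{-s}$, and the claim becomes
\[
\sum_{i=1}^r \log\frac{S(e^{-x_i})+e^{-rx_i}}{S(e^{-x_i})+e^{-s}}\ \ge\ 0 .
\]
Each summand has the form $\phi(x_i,s):=\log\bigl(1+\frac{e^{-rx_i}-e^{-s}}{S(e^{-x_i})+e^{-s}}\bigr)$. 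At the symmetric point $x_i=s/r$ every summand vanishes, which gives the equality case.

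The plan is to fix $s$ and minimize $F(x)=\sum_i \phi(x_i,s)$ over the simplex $\{x_i>0,\ \sum x_i=s\}$. One boundary case is easy. If some $x_i\to 0$, then $a_i\to 1$ while the others stay below 1. The factor for $i$ then tends to $(r-1)+1$ on the left versus $(r-1)+a$ on the right, so the left side wins there. I would handle this by a separate direct comparison, or by noting that the boundary of the simplex corresponds to the case $r-1$ with one variable equal to $1$.

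For the interior, I would use a tangent-line (SOS-type) trick. Find a function $c(s)$ such that
\[
\phi(x,s)\ \ge\ c(s)\,(x-s/r)\qquad\text{for all }x\in(0,s).
\]
Summing over $i$ kills the right-hand side, because $\sum_i(x_i-s/r)=0$. Equality holds only when every $x_i=s/r$, provided the inequality is strict off that point. The natural choice is $c(s)=\partial_x\phi(s/r,s)$. The one-variable inequality then reduces, after clearing the positive denominator, to showing that
\[
g(x)=\log\bigl(S(e^{-x})+e^{-rx}\bigr)-\log\bigl(S(e^{-x})+e^{-s}\bigr)-c(s)(x-s/r)
\]
is nonnegative on $(0,s)$ with a unique zero at $s/r$. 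I would prove this via sign analysis of $g'$. Writing $y=e^{-x}$, $g'$ becomes a rational function of $y$ with parameter $A=e^{-s}$. The goal is to show that its numerator changes sign only once, at $y=A^{1/r}$, perhaps by factoring out $(y-A^{1/r})$ and checking that the quotient has positive coefficients after a substitution $y=A^{1/r}t$.

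The main obstacle is this single-variable step. A single tangent line may fail globally, because $\phi(\cdot,s)$ is not convex on all of $(0,s)$ when $x$ is small. If that happens, I would fall back on a mixing (smoothing) argument. Show that replacing two unequal $a_i,a_j$ by $\sqrt{a_ia_j}$ does not increase the ratio $\mathrm{LHS}/\mathrm{RHS}$; this move keeps $a$ fixed, so the right side only changes through those two factors. The check reduces to a two-variable inequality: $f(u)=\log\frac{S(e^{u})+e^{ru}}{S(e^{u})+a}$ should be midpoint-convex in $u=\log a_i$ on $u<0$ with $a$ fixed. That could be verified by a direct computation of $f''$, though possibly only on the relevant range $e^{u}\ge a$. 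Since $a_i\ge a$ automatically, that range suffices, and iterating the smoothing together with compactness yields the result along with strictness.
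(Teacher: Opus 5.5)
Your proposal has a genuine gap: both of your routes rely on a convexity that holds only on part of the range.

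In the tangent-line step the slope is forced, since $\phi(s/r,s)=0$ and $\phi$ is smooth. So $c(s)=\partial_x\phi(s/r,s)=-rG^{r-1}/(1+G+\cdots+G^{r-1})$ with $G=e^{-s/r}$, and hence $c(s)(s-s/r)\to0$ as $s\to\infty$. But letting $x\to s$ (i.e.\ $a_i\to a$ while the other $a_j\to1$, which is admissible) gives
\[
\phi(x,s)\to\log\frac{1+a+\cdots+a^{r-1}}{2+a+\cdots+a^{r-2}},
\]
which is close to $-\log 2$ for small $a$. So $\phi(x,s)\ge c(s)(x-s/r)$ fails near $x=s$ once $s$ is large; for $r=3$, $a=10^{-6}$ the right side is only about $-0.003$. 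In particular $g'$ must change sign more than once.

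The fallback fails for the same reason. If $f(u)=\log\frac{S(e^u)+e^{ru}}{S(e^u)+a}$ were convex on $\{a\le e^u<1\}$, Jensen would finish immediately and the tangent line would be valid. Explicitly, for $r=3$ and $x=e^u$,
\[
f''(u)=\frac{(1-a)\,x\,\bigl(x^3(4+7x+4x^2)-a(1+4x+x^2)\bigr)}{(1+x+x^2)^2\,(x+x^2+a)^2},
\]
so $f''(u)<0$ whenever $16e^{3u}<a$. Take $a=10^{-6}$ and $(a_1,a_2,a_3)=(0.003e^{0.2},\,0.003e^{-0.2},\,1/9)$. Replacing $a_1,a_2$ by their geometric mean $0.003$ strictly \emph{increases} LHS/RHS, even though $a_1a_2>a$. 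Your boundary remark is not an argument either: one factor being larger does not control the product, and putting $a_i=1$ does not give the statement for $r-1$.

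The missing idea is that only half-convexity is available. In your notation, $f$ is convex for $\log G\le u<0$, i.e.\ for $a_i\ge G=a^{1/r}$; this is Lemma~\ref{lemma1} (concavity of $F$ on $[0,L)$). The variables below $G$ need a separate two-point inequality that balances one small variable against $m$ equal large ones:
\[
m\,f(\log G+t)+f(\log G-mt)\ge0\qquad(0\le t<-\log G,\ 1\le m\le r-1),
\]
strict for $t>0$. This is equivalent to Lemma~\ref{lemma2}.

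Given both facts, the paper uses subadditivity and Jensen for $F$ on $[0,L)$ to bound $\sum_iF(u_i)$ by a sum of terms $kF(w/k)+F(-w)$ with $1\le k\le r-1$. Each such term is then handled by Lemma~\ref{lemma2}. That two-point inequality is the real content of the theorem, and your plan gives no way to obtain it. The paper proves it by rewriting everything through $R_\rho(t)=(t^{1-\rho}-1)/(1-t)$. It then combines Chebyshev's sum inequality, convexity of $t\mapsto\log R_\rho(e^{-t})$, and monotonicity of $\sinh t/t$.
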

\noindent
We mention that this inequality was proved in \cite{KKT} for the case $3\leq r\leq 11$. 
To the best of the authors' knowledge, the stronger Conjecture~2 of \cite{KKT} is still
open.

The inequality in Theorem~\ref{thm:1} is related to a problem in extremal set theory.
For fixed integers $r\geq 2$ and $t\geq 1$, we say that families
$\FF_1,\ldots,\FF_r\subset 2^{[n]}$ are $r$-cross $t$-intersecting
if $|F_1\cap\cdots\cap F_r|\geq t$ for all $F_i\in\FF_i$ ($1\leq i\leq r$),
where $[n]:=\{1,2,\ldots,n\}$. For a fixed real number $p\in(0,1)$, we define
the $\mu_p$-measure of a family $\FF\subset 2^{[n]}$ by
\[
 \mu_p(\FF):=\sum_{F\in\FF}p^{|F|}(1-p)^{n-|F|}.
\]
Using Theorem~\ref{thm:1} we show the following.
\begin{theorem}\label{thm:2}
Let $r\geq 2$ and $n\geq t\geq 1$ be integers, and $p_1,\ldots,p_r\in(0,\frac{r-1}r)$ be 
real numbers. Suppose that $\FF_1,\ldots,\FF_r\subset 2^{[n]}$ are 
$r$-cross $t$-intersecting families. Then,
\[
 \prod_{i=1}^r\mu_{p_i}(\FF_i)\leq (\alpha_1\cdots\alpha_r)^t,
\]
where $\alpha_i$ is the unique root of the equation $(1-p_i)x^r-x+p_i=0$ in $x\in(0,1)$.
\end{theorem}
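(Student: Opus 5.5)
Theorem~\ref{thm:2} will follow from a random walk argument combined with Theorem~\ref{thm:1}, in the style of Frankl's walk method used in \cite{KKT}.

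\textbf{Step 1: reduce to shifted families.} Shifting preserves $\mu_{p_i}$ and the $r$-cross $t$-intersecting property, so I will assume every $\FF_i$ is shifted and also an up-set.

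\textbf{Step 2: walk characterization.} To each $F\subset[n]$ I associate a lattice walk $W_i(F)$ that reads $j=1,2,\dots$ in turn. If $j\in F$ the walk takes an up-step $(0,1)$; if $j\notin F$ it takes a right-step $(1,0)$ weighted so that it matches the line slope $r-1$. The standard claim is that for shifted $r$-cross $t$-intersecting families, for every $(F_1,\dots,F_r)$ some prefix $[k]$ satisfies
\[
\sum_{i=1}^r|F_i\cap[k]|\ \ge\ (r-1)k+t .
\]
Indeed, if this failed for every $k$, one could shift the $F_i$ within the families to tuples whose common intersection is smaller than $t$. This is where $p_i<\frac{r-1}{r}$ matters: it makes the joint walk drift below that line.

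\textbf{Step 3: bound the hitting probability.} Choose $F_i$ independently with each element included with probability $p_i$. Then $\prod_i\mu_{p_i}(\FF_i)\le \P(\text{the joint process ever hits the line } (r-1)k+t)$. The hitting probability for a single walk whose step is $+1$ with probability $p$ and $-(r-1)$ otherwise, from height $0$ to height $t$, is $\alpha^t$, where $\alpha$ is the root in $(0,1)$ of $(1-p)x^r-x+p=0$. That equation is just the generating-function (martingale) identity $p\alpha^{-1}+(1-p)\alpha^{r-1}=1$ after substituting $x=\alpha$.

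For the joint process, the increment at each step is $\sum_i\xi_{i}-(r-1)$ with $\xi_i\sim\mathrm{Bernoulli}(p_i)$. I want a product martingale $\prod_i\alpha_i^{-\text{(height of walk } i)}$ appropriately normalized, or equivalently to show that $\beta=\prod\alpha_i$ satisfies the supermartingale inequality
\[
\E\Bigl[\beta^{-(\sum_i\xi_i-(r-1))}\Bigr]\le 1 .
\]
Optional stopping then gives $\P(\text{hit})\le\beta^t$. Writing this out,
\[
\beta^{r-1}\prod_i\bigl(p_i\beta^{-1}+1-p_i\bigr)\le 1 .
\]
Using $p_i=\alpha_i(1-\alpha_i^{r-1})/(1-\alpha_i^r)$, so that $1-p_i=(1-\alpha_i)/(1-\alpha_i^r)$, the factor $p_i\beta^{-1}+1-p_i$ becomes a ratio of geometric sums. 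After clearing the denominators $1-\alpha_i^r=(1-\alpha_i)(1+\dots+\alpha_i^{r-1})$, the inequality should become exactly
\[
\prod_i(\alpha_i+\dots+\alpha_i^{r-1}+a)\le\prod_i(\alpha_i+\dots+\alpha_i^r),\qquad a=\prod_i\alpha_i,
\]
which is Theorem~\ref{thm:1}. For $r=2$ the corresponding inequality is checked directly: it reduces to $(a_1+a_1a_2)(a_2+a_1a_2)\le(a_1+a_1^2)(a_2+a_2^2)$, which is AM--GM-type.

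\textbf{Main obstacle.} The delicate part is Step 2 with unequal $p_i$: making the single combined line and the product supermartingale fit together correctly. I would first verify the algebraic reduction in Step 3 carefully, since it dictates the right normalization of the walk, and then adapt the shifting argument from \cite{KKT}.
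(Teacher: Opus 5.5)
Your proposal is correct. Steps~1--2 are the paper's Section~4 (simultaneous shifting plus the excess condition of Lemma~\ref{lemma7}), and your Step~3 algebra is right. Write $\gamma:=\prod_i\alpha_i$ (your $\beta$) and let $S_k=\sum_i|F_i\cap[k]|-(r-1)k$ be the joint height. The one-step condition $\gamma^{r-1}\prod_i(p_i\gamma^{-1}+q_i)\le1$ is equivalent to $\prod_i(p_i+q_i\gamma)\le\gamma$. The identities $q_i(\alpha_i+\cdots+\alpha_i^r)=\alpha_i$ and $p_i+q_i\gamma=q_i(\alpha_i+\cdots+\alpha_i^{r-1}+\gamma)$ turn this into Theorem~\ref{thm:1}, which is exactly the paper's Lemma~\ref{lemma6}.

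You differ from the paper in how the walk bound and Theorem~\ref{thm:1} are joined. The paper first proves $\prod_i\mu_{p_i}(\FF_i)\le\beta^t$, where $\beta\in(0,1)$ is the exact root of $\phi(z)=z$ with $\phi(z)=\prod_i(p_i+q_iz)$. It does this by the recursion $h_{M+1}(d)=\sum_jc_jh_M(d-1+j)$ and induction on $M$. It then uses convexity of $f(x)=\phi(x)-x$ (Lemmas~\ref{lemma3}--\ref{lemma4}) to get $\beta\le\gamma$ from $f(\gamma)\le0$.

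You instead apply optional stopping, at the bounded time $\min\{k:S_k\ge t\}\wedge n$, to the nonnegative supermartingale $\gamma^{-S_k}$ directly at base $\gamma$. This removes the need for Lemmas~\ref{lemma3}--\ref{lemma4}. In fact the paper's induction only uses $\phi(\beta)\le\beta$, so it would run verbatim with $\gamma$ in place of $\beta$.

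Your formulation also explains why Theorem~\ref{thm:1} is the right inequality: it is precisely the supermartingale condition at base $\prod_i\alpha_i$. Equality holds when all $\alpha_i$ coincide, since the single-walk heights satisfy $\sum_ih_i=rS_k$, so then $\prod_i\alpha_i^{-h_i}=\gamma^{-S_k}$ is a genuine martingale. The paper's route, in exchange, stays purely elementary and records the sharper intermediate bound $\beta^t$.

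A few corrections.
\begin{itemize}
\item Step~2 is only sketched. Its actual content is the minimal-witness argument of Lemma~\ref{lemma7}: take $b$ to be the $t$-th element of $C=\bigcap_iA_i$, and use failure of the excess inequality at $k=b$ to find $a<b$ lying in at most $r-2$ of the sets. Then move $b$ to $a$ in one set missing $a$, and check that the new tuple is still a witness with smaller intersection.
\item That argument is purely combinatorial, so $p_i<\frac{r-1}r$ plays no role there; it only guarantees $\alpha_i\in(0,1)$.
\item The ``main obstacle'' you name is therefore not one. Your Step~3 computation already handles unequal $p_i$ completely.
\item The product $\prod_i\alpha_i^{-h_i}$ is not a function of $S_k$ when the $\alpha_i$ differ, so it is not ``equivalent'' to what you need. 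The supermartingale $\gamma^{-S_k}$ you actually use is the right object.
\item For $r=2$ the required inequality is an identity, not an AM--GM estimate: both sides equal $a_1a_2(1+a_1)(1+a_2)$.
\end{itemize}
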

This result was stated in \cite{T2011} as Lemma~1, but the proof was incomplete. Indeed, the equality in the last line of the proof should be replaced
by an inequality, which follows from Theorem~\ref{thm:1}.
We also make explicit the restriction $0<p_i<1-1/r$,
which is needed for the existence of the root $\alpha_i\in(0,1)$.
For completeness, we include the proof of Theorem~\ref{thm:2} in Section~3.

We derive Theorem~\ref{thm:1} from a stronger inequality. To state the result,
for real numbers $\rho>0$ and $0<x<1$, let
\[
 P_\rho(x):=\frac{1-x^\rho}{1-x},
\]
and set $P_\rho(1):=\rho$. Note that if $\rho=r$ is an integer, then
$P_r(x)=1+x+\cdots+x^{r-1}$.
\begin{theorem}\label{thm:3}
For an integer $r\geq 3$, real numbers $a_1,\ldots,a_r\in(0,1)$ and $\rho\geq r$,
let $G=(a_1\cdots a_r)^{1/r}\in(0,1)$. Then,
\begin{align}\label{eq1}
\prod_{i=1}^r a_i P_\rho(a_i)\geq
\prod_{i=1}^r (P_\rho(a_i)-1+G^\rho).
\end{align}
Moreover, equality holds if and only if $a_1=\cdots=a_r$.
\end{theorem}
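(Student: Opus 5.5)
The plan is to reduce \eqref{eq1} to a one-variable inequality via a tangent-line argument in logarithmic coordinates.

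\emph{Step 1: rewrite the left-hand side.} For $0<x<1$,
\[
xP_\rho(x)=\frac{x-x^{\rho+1}}{1-x}=P_\rho(x)-1+x^\rho .
\]
Set $Q(x):=P_\rho(x)-1=(x-x^\rho)/(1-x)\ge 0$. Then \eqref{eq1} becomes
\[
\prod_{i=1}^r\bigl(Q(a_i)+a_i^\rho\bigr)\ \ge\ \prod_{i=1}^r\bigl(Q(a_i)+G^\rho\bigr).
\]
Here $G^\rho$ is exactly the geometric mean of the numbers $a_i^\rho$. Note that a naive Hölder/superadditivity argument for geometric means gives inequalities in the wrong direction for the right-hand side. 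So I would instead work additively in logarithms.

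\emph{Step 2: reduce to one variable.} Fix $G$ and put $s_i=\log a_i$ and $s_0=\log G$, so that $\sum_i s_i=rs_0$. Define
\[
\psi(s):=\log\bigl(Q(e^s)+e^{\rho s}\bigr)-\log\bigl(Q(e^s)+G^\rho\bigr),
\]
which satisfies $\psi(s_0)=0$. The inequality is $\sum_i\psi(s_i)\ge 0$. Since $\sum_i(s_i-s_0)=0$, it suffices to prove the tangent-line bound
\[
\psi(s)\ \ge\ \psi'(s_0)\,(s-s_0)
\]
on the range of $s$ that can actually occur. That range is restricted: the other $a_j$ are $<1$, so each $a_i=G^r/\prod_{j\ne i}a_j>G^r$. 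Hence we only need $s\in(rs_0,0)$, i.e.\ $a:=e^s\in(G^r,1)$. This restriction is where $r\ge 3$ and $\rho\ge r$ should enter. Equality in the tangent bound holds only at $s=s_0$, and this will give the equality case $a_1=\cdots=a_r$.

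\emph{Step 3: the one-variable inequality (main obstacle).} Compute
\[
\psi'(s_0)=\frac{\rho G^\rho}{Q(G)+G^\rho}=\frac{\rho G^{\rho}}{GP_\rho(G)}.
\]
In terms of $a\in(G^r,1)$ with $g=G$ as a parameter, the required bound reads
\[
\frac{aP_\rho(a)}{Q(a)+g^\rho}\ \ge\ \Bigl(\frac{a}{g}\Bigr)^{\rho g^{\rho-1}/P_\rho(g)} .
\]
My first attempt would be to define $D(s):=\psi(s)-\psi'(s_0)(s-s_0)$ and show it has a unique critical point at $s_0$ on $(rs_0,0)$. Equivalently, the numerator of $D'$ should change sign only at $a=g$. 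After clearing denominators this is a polynomial-type expression in $a$, $a^\rho$, $g$, $g^\rho$.

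If that fails, I would check the endpoints separately and then argue convexity of $D$ on each side of $s_0$:
\begin{itemize}
\item as $a\to1$: $\psi(0)=\log\bigl(\rho/(\rho-1+g^\rho)\bigr)$ against $-\psi'(s_0)s_0$;
\item as $a\to g^r$: here $\rho\ge r\ge3$ makes $a^\rho\le g^{r\rho}$ negligible.
\end{itemize}
I expect the hardest part is the regime $a\to g^r$ with $g$ close to $1$. There the inequality is tight to second order, and one must show that the second derivative $\psi''(s_0)$ is positive when $\rho\ge 3$. That positivity is where the hypotheses $\rho\ge r\ge 3$ should be used.
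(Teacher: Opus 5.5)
\textbf{The central reduction fails: the tangent-line bound in Step 2 is false on the admissible range, so Step 3 cannot be carried out.}

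Steps 1 and 2 are algebraically correct. The identity $xP_\rho(x)=Q(x)+x^\rho$, the value $\psi'(s_0)=\rho G^{\rho-1}/P_\rho(G)$, and the restriction $a_i>G^r$ all hold. But the bound $\psi(s)\ge\psi'(s_0)(s-s_0)$ you reduce to is not true.

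Take $r=\rho=3$ and $G=\tfrac12$. Then $Q(x)=x+x^2$ and $\psi'(s_0)=3/7$. The value $a_1=0.15$ is admissible: take $a_2=a_3=\sqrt{5/6}$, so that $a_1a_2a_3=1/8$. Then
\[
\psi(\log a_1)=\log\frac{0.175875}{0.2975}\approx-0.5256,\qquad \psi'(s_0)(\log a_1-s_0)=\tfrac37\log 0.3\approx-0.5160,
\]
so $D(\log a_1)<0$. For this $G$ the bound fails for every $a$ between $G^3$ and about $0.17$.

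The theorem still holds at this configuration, but only because the positive deviations carry slack. Indeed $\psi(\log a_2)=\psi(\log a_3)\approx 0.2925$, well above their tangent values $\tfrac37\log(2\sqrt{5/6})\approx 0.2580$, giving $\sum_i\psi(s_i)\approx 0.059>0$.

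Since $\psi(s_0)=0$, $\psi$ is differentiable, and $\sum_i(s_i-s_0)=0$, the tangent line is the only minorant of the form $\lambda(s-s_0)$ available. So no separable tangent-line argument can work. Neither your unique-critical-point plan nor your piecewise-convexity plan for $D$ can succeed. Note also that the failure appears at the moderate value $g=\tfrac12$, not only in the regime $g\to1$ that you flagged.

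\textbf{What is missing is a mechanism that transfers slack from the terms with $s_i>s_0$ to those with $s_i<s_0$.} In the paper's notation, $\psi(s_0+u)=u-F(u)$, so your bound reads $F(u)\le F'(0)u$.
\begin{itemize}
\item For $u\ge 0$ this is true, because $F$ is concave on $[0,L)$ (Lemma~\ref{lemma1}).
\item For $u<0$ it is false, as shown above.
\end{itemize}
On the negative side the paper replaces the tangent line by the two-point inequality $mF(u)+F(-mu)\le 0$, for $0\le u<L$ and integers $1\le m\le\rho-1$ (Lemma~\ref{lemma2}). This compares one negative deviation with $m$ equal positive deviations rather than with a line.

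An arbitrary zero-sum tuple $(v_1,\dots,v_k,-w_1,\dots,-w_l)$ is then reduced to these two-point configurations in two moves:
\begin{itemize}
\item Subadditivity of $F$ on $[0,L)$, which follows from concavity and $F(0)=0$, splits each $v_i$ into the pieces $v_iw_j/S$.
\item Jensen's inequality merges these pieces into $k$ copies of $w_j/k$.
\end{itemize}
The hypothesis $\rho\ge r$ enters through $k\le r-1\le\rho-1$, which bounds the number of positive deviations available to balance each negative one. Any repair of your argument needs some such coupling between terms, rather than a term-by-term linear bound on each $\psi(s_i)$.
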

Theorem~\ref{thm:1} follows from Theorem~\ref{thm:3} by setting $\rho=r$.

\section{Proof of Theorem~\ref{thm:3}}
We fix an integer $r\geq 3$, real numbers $a_1,\ldots,a_r\in(0,1)$ and 
$\rho\geq r$. Let $G=(a_1\cdots a_r)^{1/r}$.
For $x\in(0,1)$, we have $P_\rho(x)>1$ and $P_\rho(x)-1+G^\rho>G^\rho>0$, and
the inequality \eqref{eq1} is rewritten as
\begin{align}\label{eq2}
 \prod_{i=1}^r
 \frac{P_\rho(a_i)-1+G^\rho}{a_iP_\rho(a_i)}
\leq 1.
\end{align}
To estimate the LHS of \eqref{eq2}, we introduce a function $F(u)$ for
$u<L:=-\log G$ by
\[
 F(u):=\log\frac{P_\rho(Ge^u)+G^\rho-1}{G P_\rho(Ge^u)},
\]
and for each $i$, let
\[
 u_i=\log\frac{a_i}{G},
\]
so that $P_{\rho}(Ge^{u_i})=P_\rho(a_i)$.
Then $u_i=\log a_i-\log G<L$ and 
\begin{align}\label{eq3}
\sum_{i=1}^r u_i=\log\frac{a_1\cdots a_r}{G^r}=0, 
\end{align}
and moreover, 
\[
 \frac{P_\rho(a_i)-1+G^\rho}{a_iP_\rho(a_i)}
 =e^{-u_i+F(u_i)}.
\]
Thus \eqref{eq2} is rewritten as
$\exp\left(\sum_{i=1}^r F(u_i)\right)\leq 1$, 
and our goal is to show that
\begin{align}\label{eq4}
 \sum_{i=1}^r F(u_i)\leq 0, 
\end{align}
and equality holds if and only if $u_1=\cdots=u_r=0$.

We will prove the following two lemmas. 

\begin{lemma}\label{lemma1}
We have $F(0)=0$, and $F$ is concave on $[0,L)$, that is,
$F''(u)\leq 0$ for all $0\leq u<L$.
\end{lemma}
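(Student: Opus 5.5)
The plan is to verify $F(0)=0$ from an algebraic identity, reduce concavity to monotonicity of $F'$ in the variable $x=Ge^u$, and then collapse the parameter $G$ with a comparison that leaves a one-variable inequality for $P_\rho$. Throughout, write $Q:=P_\rho$ and $c:=1-G^\rho\in(0,1)$.

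\emph{The value at $0$.} Directly from $Q(x)=(1-x^\rho)/(1-x)$ we get the identity
\[
Q(x)-1+x^\rho=(1-x^\rho)\Bigl(\tfrac{1}{1-x}-1\Bigr)=xQ(x).
\]
Taking $x=G$ gives $Q(G)-1+G^\rho=GQ(G)$, so the argument of the logarithm in $F(0)$ equals $1$ and $F(0)=0$.

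\emph{Reducing concavity to monotonicity.} Put $x=Ge^u$, so $u\in[0,L)$ corresponds to $x\in[G,1)$ and $d/du=x\,d/dx$. Since $F(u)=\log(Q(x)-c)-\log Q(x)-\log G$, we obtain
\[
F'(u)=xQ'(x)\Bigl(\frac1{Q-c}-\frac1Q\Bigr)=c\,h(x),\qquad h(x):=\frac{xQ'(x)}{Q(x)\,(Q(x)-c)}.
\]
Here $Q-c\ge Q(G)-c=GQ(G)>0$ on $[G,1)$, because $Q$ is increasing. So it suffices to show that $h$ is nonincreasing on $[G,1)$, i.e.\ that
\[
(\log h)'=\frac1x+\frac{Q''}{Q'}-\frac{Q'}{Q}-\frac{Q'}{Q-c}\le 0 .
\]

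\emph{Eliminating $G$.} The only dependence on $G$ is through $c$. For $x\ge G$ we have $c=1-G^\rho\ge 1-x^\rho$, hence by the identity above
\[
Q-c\le Q-1+x^\rho=xQ .
\]
Since $Q'>0$, this gives $-Q'/(Q-c)\le -Q'/(xQ)$. It is therefore enough to prove, for all $x\in(0,1)$,
\[
\frac1x+\frac{Q''}{Q'}-\frac{Q'}{Q}-\frac{Q'}{xQ}\le 0 .
\]
Introduce $m(x):=xQ'(x)/Q(x)$, so that $xm'/m=1+xQ''/Q'-xQ'/Q$. The last inequality then becomes $xm'/m\le m/x$, that is,
\[
x^2m'(x)\le m(x)^2 \qquad (0<x<1).
\]

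\emph{The main obstacle.} I expect the hardest step to be this single-variable inequality, valid for $\rho\ge r\ge3$ (in fact presumably for $\rho\ge2$).

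A sanity check at $x\to1$ is encouraging. There $Q$ behaves like the generating function of a uniform distribution on $[0,\rho)$, so $m\to(\rho-1)/2$ and $m'\to(\rho^2-1)/12$. The inequality then reads $(\rho^2-1)/12\le(\rho-1)^2/4$, i.e.\ $\rho\ge2$. For small $x$ both sides are of order $x^2$, with leading coefficients $1$ on each side. Hence the next-order terms must be compared, and this is where the hypothesis on $\rho$ should enter.

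To handle general $x$, I would write $m$ in closed form:
\[
m=\frac{x}{1-x}-\frac{\rho x^\rho}{1-x^\rho}.
\]
Then I would substitute $x=e^{-s}$, $s>0$, so that $m$ becomes a difference of the functions $\phi(s)=1/(e^s-1)$ and $\rho\,\phi(\rho s)$. This turns $x^2m'\le m^2$ into an inequality between hyperbolic-type functions of $s$ and $\rho s$. I would first attempt to prove it by clearing denominators and showing that the resulting expression has nonnegative power-series coefficients in $x$, treating the integer case $\rho=r$ first. Failing that, I would study it as a function of $\rho$ for fixed $s$, show that it is monotone in $\rho$, and reduce to checking $\rho=3$, or $\rho=2$ as a boundary case.
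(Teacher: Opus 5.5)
Everything you actually carry out is correct, and it is the paper's argument in different notation. With $H(u)=\log P_\rho(Ge^u)$ as in the paper, your $m(x)$ is $H'(u)$ and $x^2m'(x)$ is $xH''(u)$. So your target $x^2m'\le m^2$ is exactly the paper's claim \eqref{eq6}. Likewise, your elimination of $G$ through $Q-c\le xQ$ is the paper's step $(P_\rho(x)-c)/x-P_\rho(x)=(G^\rho-x^\rho)/x\le 0$.

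The gap is that you stop at the step that carries the content of the lemma: $x^2m'\le m^2$ is never proved. The check at $x\to1$ is only a necessary condition, and the two strategies you list are not carried out. Moreover, a coefficient-positivity argument for integer $\rho=r$ would not cover the lemma as stated, since Theorem~\ref{thm:3} needs it for all real $\rho\ge r$. As written, you have $F(0)=0$ plus a reduction of concavity to an unproved inequality.

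The missing step is a direct computation, and it simplifies cleanly. Write $m=m_0-m_1$ with $m_0=x/(1-x)$ and $m_1=\rho x^\rho/(1-x^\rho)$. The first piece satisfies $x^2m_0'=m_0^2$ exactly, and $x^2m_1'=\rho^2x^{\rho+1}/(1-x^\rho)^2$. Hence
\[
m^2-x^2m'=m_1^2-2m_0m_1+x^2m_1'=\frac{\rho x^{\rho+1}}{(1-x^\rho)^2}\Bigl(\rho\bigl(1+x^{\rho-1}\bigr)-2P_\rho(x)\Bigr),
\]
which is \eqref{eq7}. In particular, the difference is of order $x^{\rho+1}$ near $0$, so low-order expansions there cannot decide the sign. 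The bracket is positive for $\rho>2$ by the trapezoid rule for the strictly convex function $t\mapsto t^{\rho-1}$ on $[x,1]$:
\[
\frac{1-x^\rho}{\rho}=\int_x^1 t^{\rho-1}\,dt<\frac{1-x}{2}\bigl(1+x^{\rho-1}\bigr),
\]
that is, $2P_\rho(x)<\rho(1+x^{\rho-1})$. This gives $x^2m'<m^2$ on $(0,1)$ for every real $\rho>2$, consistent with your $\rho\ge2$ heuristic. With this step added, your argument becomes a complete proof, and in fact yields the strict inequality $F''<0$.
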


\begin{lemma}\label{lemma2}
We have 
\begin{align}\label{eq5}
mF(u)+F(-mu)\leq 0 
\end{align}
for every $0\leq u<L$ and every integer $m$ with $1\leq m\leq\rho-1$,
where $\rho\geq r\geq 3$.
Moreover, inequality \eqref{eq5} is strict unless $u=0$.
\end{lemma}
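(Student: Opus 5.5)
The plan is to fix $m$ and $G$ and study
\[
\varphi(u):=mF(u)+F(-mu),\qquad 0\le u<L .
\]
Since $\varphi(0)=(m+1)F(0)=0$ by Lemma~\ref{lemma1}, it suffices to prove $\varphi'(u)<0$ for $0<u<L$. That is, we want
\begin{align*}
F'(u)<F'(-mu)\qquad(0<u<L,\ 1\le m\le\rho-1).
\end{align*}

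First I would put $F$ in a usable closed form. With $x=Ge^u$ we have $P_\rho(x)-1+G^\rho=\frac{x-x^\rho}{1-x}+G^\rho$, so
\[
F(u)=\log Q(x)-\log G-u+\log G^{\,0}\!,\qquad Q(x):=\frac{x-x^\rho+G^\rho(1-x)}{x\,(1-x^\rho)}\cdot x ,
\]
that is, $F(u)=\log\bigl(1-(1-G^\rho)(1-x)/(1-x^\rho)\bigr)-\log G$. Writing $R(x):=1-(1-G^\rho)\frac{1-x}{1-x^\rho}$, this gives $F'(u)=xR'(x)/R(x)$.

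The target then becomes a comparison of the single function $\Phi(x):=xR'(x)/R(x)$ at the two points
\[
y=Ge^{u}\in(G,1),\qquad z=Ge^{-mu}=G^{m+1}/y^{m}\in(0,G).
\]
So we need $\Phi(y)<\Phi(z)$ whenever $y^mz=G^{m+1}$ and $y>G$.

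Lemma~\ref{lemma1} already gives $\Phi(y)\le\Phi(G)=F'(0)$ for $y\ge G$, with strict inequality for $y>G$ (strictness should follow from the proof of that lemma). The difficulty is the left side $x<G$. There $F$ is not concave in general, and $F'(v)\to0$ as $v\to-\infty$, because $F(v)\to(\rho-1)\log G$. Hence $\Phi(z)\ge\Phi(G)$ cannot hold for all $z<G$, and this is exactly where $m\le\rho-1$ must enter.

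My approach for this step is to bound the two sides separately. I would estimate
\[
\Phi(y)\le \Phi(G)-c\,(\text{something in }y-G),
\]
using the explicit form of $R$, and bound $\Phi(z)$ from below using $z=G^{m+1}y^{-m}\ge G^{m+1}\ge G^{\rho}$. The rough reason this should work is that $\Phi(x)\approx$ const$\cdot x/G^\rho$ when $x\ll G^\rho$, so $\Phi(z)$ stays comparable to $\Phi(G)$ as long as $z$ has not dropped below roughly $G^{\rho}$. The constraint $m\le\rho-1$ guarantees $z>G^{\rho}$ over the whole range $0<u<L$.

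If this splitting is too lossy, the fallback is to clear denominators in the inequality
\[
\frac{yR'(y)}{R(y)}<\frac{zR'(z)}{R(z)}
\]
and prove the resulting polynomial-type inequality in $(y,z,G)$ directly, treating $\rho$ as a real parameter $\ge r$. For this I would substitute $z=G^{m+1}y^{-m}$ and use monotonicity in $m$ to reduce to the extreme case $m=\rho-1$ (or $m=\lfloor\rho-1\rfloor$).

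Strictness for $u\ne0$ comes from $\varphi(0)=0$ together with $\varphi'<0$ on $(0,L)$.

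I expect the main obstacle to be the lower bound for $\Phi(z)$ on $(G^{\rho},G)$, i.e.\ controlling $F'$ on the negative half-line where Lemma~\ref{lemma1} gives no information. The first thing I would try there is to show that $\Phi(x)$ is increasing for $x\in(G^{\rho},G]$, or at least that $\min_{[G^{m+1},G]}\Phi\ge\Phi(y)$, by analysing the sign of the derivative of $\log\Phi$.
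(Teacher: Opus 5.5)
Your reduction is legitimate as far as it goes. We have $\varphi(0)=0$, and $F'(u)<F'(-mu)$ on $(0,L)$ would give \eqref{eq5} with strictness. Your formula $F(u)=\log R(x)-\log G$ is also right, although the line before it carries a spurious $-u$.

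The gap is that this pointwise comparison is more than \eqref{eq5} requires, and the proposal never proves it. The step you call the main obstacle is the whole content of the lemma, and the concrete tools you suggest for it fail:
\begin{itemize}
\item Your $\Phi$ is \emph{not} increasing on $(G^\rho,G]$. The proof of Lemma~\ref{lemma1} gives $F''(0)<0$, so $\Phi$ is strictly decreasing on a neighbourhood of $x=G$, including just to the left of $G$.
\item Range-only bounds cannot work. For small $G$ and $m=\rho-1$ one has $\Phi(G)\approx 1$ but $\Phi(G^{\rho})\approx\frac12$. So no lower bound on $\Phi(z)$ that uses only $z\ge G^{m+1}$ can beat $\Phi(y)$ as $y\to G^+$.
\item At the other end the margin is tiny. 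As $u\to L$ both sides tend to about $\frac12$, with
\[
F'(-mu)-F'(u)\to\tfrac12 G^{\rho}\Bigl(\tfrac{\rho}{\rho-1}-\tfrac12\Bigr)+O(G^{2\rho}),
\]
which is about $3\cdot 10^{-4}$ for $\rho=10$, $G=\frac12$. Heuristics like ``$\Phi(z)$ stays comparable to $\Phi(G)$'' are far too crude for this.
\item The fallback reduction to $m=\rho-1$ ``by monotonicity in $m$'' is false. For small $u$, $F'(-mu)\approx F'(0)+mu\,|F''(0)|$ increases with $m$, so there the hardest case is $m=1$, not $m=\rho-1$.
\end{itemize}
So the place where $m\le\rho-1$ must enter is exactly what your plan leaves open.

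The missing idea in the paper is not to differentiate $\varphi$ at all, but to use the exact product structure of $e^{F}$.
\begin{itemize}
\item Use the paper's $R_\rho(t)=(t^{1-\rho}-1)/(1-t)$ (not your $R$), and set $p=R_\rho(Ge^u)$, $q=R_\rho(Ge^{-mu})$, $\lambda=e^{-\rho u}$. Then $e^{F(u)}=e^u\frac{p+\lambda}{p+1}$ and $e^{F(-mu)}=e^{-mu}\frac{q+\lambda^{-m}}{q+1}$.
\item Chebyshev's sum inequality reduces \eqref{eq5} to the ratio bound \eqref{eq8}, namely $q/p\ge R_{m+1}(\lambda)/R_{m+1}(1)$.
\item Convexity of $t\mapsto\log R_\rho(e^{-t})$ gives $q/p>R_\rho(e^{-(m+1)u})/R_\rho(1)$. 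The increment over $[L-u,L+mu]$ dominates the one over $[0,(m+1)u]$, and this removes $G$ entirely.
\item The ratio $R_\sigma(e^{-2d/\sigma})/R_\sigma(1)$ is increasing in $\sigma$, because $t\mapsto \sinh t/t$ is increasing. This lets one replace $\rho$ by $m+1$, and it is precisely where $m\le\rho-1$ is used.
\end{itemize}
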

These lemmas are consequences of standard calculus.
We defer their proofs until later, and we first finish the proof of \eqref{eq4}
assuming the lemmas.

If $u_i=0$ for all $1\leq i\leq r$, then \eqref{eq4} clearly holds.
Suppose that there are some non-zero terms ($u_i$ such that $u_i\neq 0$).
After deleting zero terms and reindexing, write the remaining tuple as 
$(v_1,\ldots,v_k,-w_1,\ldots,-w_l)$, where $v_i,w_j>0$.
By \eqref{eq3}, let $S:=v_1+\cdots+v_k=w_1+\cdots+w_l>0$.
Since both positive and negative terms occur, we have
$1\leq k\leq r-1\leq\rho-1$.  Moreover, $0<v_iw_j/S\leq v_i<L$ and
$0<w_j/k\leq S/k<L$. Thus every argument used below lies in the required
interval.

It follows from Lemma~\ref{lemma1} that if $x,y\geq 0$ and $0<x+y<L$, then
\[
 F(x)=F\left(\frac x{x+y}\cdot(x+y)+\frac y{x+y}\cdot 0\right)
\geq\frac x{x+y}F(x+y)+\frac y{x+y}F(0)=\frac x{x+y}F(x+y),
\]
and similarly $F(y)\geq\frac y{x+y}F(x+y)$. Therefore, we have
\[
F(x+y) \leq F(x)+F(y).
\]
Using this repeatedly, we get
\[
 F(v_i)=F\left(\frac{v_iw_1}S+\cdots+\frac{v_iw_l}S\right)
\leq F\left(\frac{v_iw_1}S\right)+\cdots+F\left(\frac{v_iw_l}S\right)
\]
for $1\leq i\leq k$. Thus
\[
\sum_{i=1}^k F(v_i)\leq\sum_{i=1}^k \sum_{j=1}^l F\left(\frac{v_iw_j}S\right)
=\sum_{j=1}^l \sum_{i=1}^k F\left(\frac{v_iw_j}S\right).
\] 
By concavity, it follows that
\begin{align*}
\sum_{i=1}^k F\left(\frac{v_iw_j}S\right)
&=k\left(\frac1k F\left(\frac {v_1w_j}S\right)+\cdots
+\frac1k F\left(\frac {v_kw_j}S\right)\right) \\
&\leq k F\left(\frac{v_1w_j}{kS} +\cdots+\frac {v_kw_j}{kS}\right) 
=k F\left(\frac{w_j}k\right).
\end{align*}
Consequently, we have
\[
\sum_{i=1}^r F(u_i)=\sum_{i=1}^k F(v_i)+\sum_{j=1}^lF(-w_j)\leq
\sum_{j=1}^l\left( k F\left(\frac{w_j}k\right) + F(-w_j)\right)\leq0,
\]
where the last inequality follows from Lemma~\ref{lemma2} with $m=k$ and 
$u=\frac{w_j}k$.

If at least one $u_i$ is nonzero, then $k,l>0$, and $w_j/k>0$ 
for all $j$.
Thus each application of Lemma~\ref{lemma2} is strict, and 
$\sum_{i=1}^rF(u_i)<0$. 
Therefore $\sum_{i=1}^rF(u_i)=0$ is possible only when all $u_i=0$,
that is, $a_i=G$.
Consequently, equality holds in \eqref{eq2} 
(and so in \eqref{eq1}) only if $a_1=\cdots=a_r$. The converse is trivial,
and this completes the proof of Theorem~\ref{thm:3} assuming the two lemmas.

\medskip
Now we prove the two lemmas.

\begin{proof}[Proof of Lemma~\ref{lemma1}]
Since $P_\rho(G)+G^\rho-1=GP_\rho(G)$, it follows that $F(0)=0$.

For $0\leq u<L$, let $x=Ge^u\in[G,1)$, $c=1-G^\rho\in(0,1)$, and 
$H(u)=\log P_\rho(x)$. Since $P_\rho(x)\geq 1$, $H(u)$ is well-defined.
We claim that 
\begin{align}\label{eq6}
(H')^2>xH''. 
\end{align}
Noting that $\frac{dx}{du}=x$ and $H'=\frac d{du}H(u)=\frac{dx}{du}\frac d{dx}\log P_\rho(x)$, we have
\[
 H'=\frac{x}{1-x}-\frac{\rho x^\rho}{1-x^\rho},\quad
 H''=\frac{x}{(1-x)^2}
       -\frac{\rho^2x^\rho}{(1-x^\rho)^2},
\]
and
\begin{align}\label{eq7}
 (H')^2-xH''
 =\frac{\rho x^{\rho+1}}{(1-x^\rho)^2}
\left(\rho(1+x^{\rho-1})-2P_\rho(x)\right).
\end{align}
We use a simple fact that $f(t):=t^{\rho-1}$ is strictly convex (because 
$\rho\geq r\geq 3$), and so, for $x<1$, we obtain
\[
\frac{1-x^\rho}{\rho}=\int_x^1f(t)dt<\frac{1-x}2(f(1)+f(x))
=\frac{1-x}2(1+x^{\rho-1}),
\]
that is, $2P_\rho(x)<\rho(1+x^{\rho-1})$. Substituting this into
\eqref{eq7}, we get $(H')^2-xH''>0$.

Recall that 
 \[
F(u)=\log(P_\rho(x)-c)-\log P_\rho(x)-\log G.
 \]
Some direct computation shows
\[
 F'=\frac{cH'}{P_\rho(x)-c},\quad
 F''=\frac{c}{(P_\rho(x)-c)^2}
 \left((P_\rho(x)-c)H''-P_\rho(x)(H')^2\right).
\]
Using $P_\rho(x)-c>0$ and \eqref{eq6}, we have
\begin{align*}
(P_\rho(x)-c)H''-P_\rho(x)(H')^2< \left(\frac{P_\rho(x)-c}{x}-P_\rho(x)
\right)(H')^2=\frac{G^\rho-x^\rho}{x}(H')^2\leq0.
\end{align*}
This yields  $F''(u)<0$. 
\end{proof}

\begin{proof}[Proof of Lemma~\ref{lemma2}]
If $u=0$, then \eqref{eq5} clearly holds with equality.
So we may assume that $u>0$.
For $t>0$, let
\[
 R_\rho(t):=\frac{t^{1-\rho}-1}{1-t},
\]
and set $R_\rho(1):=\rho-1$ so that $R_\rho(t)$ is continuous at $t=1$. 
Let $\lambda=e^{-\rho u}$.
We claim that \eqref{eq5} follows from
\begin{align}\label{eq8}
 \frac{R_\rho(Ge^{-mu})}{R_\rho(Ge^u)}\geq
 \frac{R_{m+1}(\lambda)}{R_{m+1}(1)}.
\end{align}
To this end, let 
$p=R_\rho(Ge^u)$, $q=R_\rho(Ge^{-mu})$.
A direct computation shows that
 \[
 e^{F(u)}=e^u\frac{p+\lambda}{p+1},
 \quad
 e^{F(-mu)}=e^{-mu}\frac{q+\lambda^{-m}}{q+1}.
\]
Set $\theta:=\frac{p+\lambda}{p+1}$. Then $0<\lambda<\theta<1$.
Since $e^{mF(u)+F(-mu)} =\theta^m  \frac{q+\lambda^{-m}}{q+1}$, we can rewrite
\eqref{eq5} as $\theta^m  \frac{q+\lambda^{-m}}{q+1}\leq 1$,
or equivalently,
\[
 \frac qp\geq\frac{\lambda^{-m}\theta^m-1}{p(1-\theta^m)}
=\left(\sum_{i=1}^m\theta^{i-1}\lambda^{-i}\right)\bigg/
\left(\sum_{i=1}^m\theta^{i-1}\right).
\]
The RHS is at most $\frac1m\sum_{j=1}^m\lambda^{-j}$, because
\begin{align*}
\left(\sum_{i=1}^m\theta^{i-1}\right)
\left(\sum_{j=1}^m\lambda^{-j}\right)
-m\sum_{i=1}^m\theta^{i-1}\lambda^{-i}
=\sum_{1\leq i<j\leq m}
 (\theta^{i-1}-\theta^{j-1})(\lambda^{-j}-\lambda^{-i})\geq 0,
\end{align*}
see, e.g., Theorem~43 (Tchebychef's inequality) in \cite{HLP}.
Hence, to show \eqref{eq5}, it suffices to show that
\[
 \frac qp\geq
\frac1m\sum_{j=1}^m\lambda^{-j}=\frac{\lambda^{-m}-1}{m(1-\lambda)}
= \frac{R_{m+1}(\lambda)}{R_{m+1}(1)},
\]
which is \eqref{eq8}.

\medskip
To prove \eqref{eq8}, we introduce two auxiliary functions $\Phi$ and $Q_\rho$. For the proof, it is important that $\Phi$ is an increasing function, 
and $Q_\rho$ is a convex function. For $t>0$, let
\[
 \Phi(t):=\frac{\sinh t}{t},
\]
and set $\Phi(0):=1$ so that $\Phi(t)$ is continuous at $t=0$. 
Then, $\Phi$ is strictly increasing for $t>0$, because
\[
 \Phi'(t)=\frac{t\cosh t-\sinh t}{t^2}
 =\frac{1}{t^2}\int_0^t z\sinh z\,dz>0.
\]
For $t>0$, we have
\begin{align}\label{eq9}
 R_\rho(e^{-t})
& =\frac{e^{(\rho-1)t}-1}{1-e^{-t}}
 =\frac{e^{(\rho-1)t/2}}{e^{-t/2}}\cdot
\frac{e^{(\rho-1)t/2}-e^{-(\rho-1)t/2}}{e^{t/2}-e^{-t/2}}
 =e^{\rho t/2}\frac{\sinh((\rho-1)t/2)}{\sinh(t/2)}.
\end{align}
More generally, for $\alpha>1$ and $\beta>0$, we have
\begin{align}\label{eq10}
R_\alpha(e^{-\beta})=\frac{e^{\frac{(\alpha-1)\beta}2}
\left(e^{\frac{(\alpha-1)\beta}2}-e^{-\frac{(\alpha-1)\beta}2}\right)}
{e^{-\frac{\beta}2}\left(e^{\frac{\beta}2}-e^{-\frac{\beta}2}\right)}
=(\alpha-1)e^{\frac{\alpha\beta}2}\,
\frac{\Phi((\alpha-1)\beta /2)}{\Phi(\beta /2)}.
\end{align}

For $t>0$, let $Q_{\rho}(t):=\log R_\rho(e^{-t})$, and set
$Q_\rho(0):=\log(\rho-1)$ so that $Q_\rho(t)$ is continuous at $t=0$.
For $t>0$, it follows from \eqref{eq9} that 
\begin{align*}
Q_\rho(t) &=\frac{\rho}2t+\log(\sinh((\rho-1)t/2))-\log(\sinh(t/2)),\\ 
Q'_\rho(t) &=\frac12\left(\rho-\coth(t/2)+(\rho-1)\coth((\rho-1)t/2)\right),\\
Q_{\rho}''(t)
&=\frac14\left(\frac1{\sinh^2(t/2)}-\frac{(\rho-1)^2}{\sinh^2((\rho-1)t/2)}\right)
 =\frac1{t^2}\left(\frac1{\Phi(t/2)^2}-\frac1{\Phi((\rho-1)t/2)^2}\right)>0.
\end{align*}
All expressions at $t=0$ are interpreted by continuity.
We also have $\lim_{t\to 0}Q''_\rho(t)=\rho(\rho-2)/12>0$.
Thus, $Q''_\rho(t)>0$ for all $t\geq 0$.
By convexity, for any $t>0$ and $a>0$, we have
\[
 Q_{\rho}(t+a)-Q_{\rho}(t)
 >Q_{\rho}(a)-Q_{\rho}(0).
\]
Substituting $t=L-u$ and $a=(m+1)u$, we get
\[
 Q_\rho(L+mu)-Q_\rho(L-u)=Q_\rho((L-u)+(m+1)u)-Q_\rho(L-u)
>Q_\rho((m+1)u)-Q_\rho(0).
\]
Exponentiating both sides, and using $G=e^{-L}$, it follows that
\begin{align}\label{eq11}
 \frac{R_\rho(Ge^{-mu})}{R_\rho(Ge^{u})}=
 \frac{R_\rho(e^{-(L+mu)})}{R_\rho(e^{-(L-u)})}
> \frac{R_\rho(e^{-(m+1)u})}{R_\rho(1)}.
\end{align}

For $\sigma\geq m+1$ and fixed $d:=(m+1)\rho u/2$, let
\[
 H_d(\sigma):=\frac{R_\sigma(e^{-2d/\sigma})}{R_\sigma(1)}
=e^d\frac{\Phi((\sigma-1)d/\sigma)}{\Phi(d/\sigma)},
\]
where we used \eqref{eq10} in the last equality.
Recall that $\Phi(t)$ is increasing in $t$. Thus, $\Phi((\sigma-1)d/\sigma)$
is increasing in $\sigma$ and $\Phi(d/\sigma)$ is decreasing in $\sigma$, 
and so $H_d(\sigma)$ is increasing in $\sigma$.
Therefore, using $\rho\geq m+1$, we have
\begin{align}\label{eq12}
\frac{R_\rho(e^{-(m+1)u})}{R_\rho(1)}=H_d(\rho)\geq H_d(m+1)=
\frac{R_{m+1}(e^{-\rho u})}{R_{m+1}(1)}
 =\frac{R_{m+1}(\lambda)}{R_{m+1}(1)}.
\end{align}
Finally, by \eqref{eq11} and \eqref{eq12}, we obtain
\[
 \frac{R_\rho(Ge^{-mu})}{R_\rho(Ge^u)}
 >\frac{R_\rho(e^{-(m+1)u})}{R_\rho(1)}
 \geq\frac{R_{m+1}(\lambda)}{R_{m+1}(1)}.
\]
This proves \eqref{eq8}, which yields \eqref{eq5} without equality.

If $u=0$, then equality holds in \eqref{eq5}.
This completes the proof of Lemma~\ref{lemma2}.
\end{proof}

\section{Proof of Theorem~\ref{thm:2}}
For $1\leq i\leq r$, let $q_i:=1-p_i\in(\frac1r,1)$. Let
\[
 f(x):=-x+\prod_{i=1}^r(p_i+q_ix).
\]
\begin{lemma}\label{lemma3}
 We have $f(0)>0$, $f(1)=0$, $f'(1)>0$, and $f''(x)>0$ for $x>0$.
\end{lemma}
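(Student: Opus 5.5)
The plan is to check the four claims directly from the product form of $f$, writing $g(x):=\prod_{i=1}^r(p_i+q_ix)$ so that $f(x)=g(x)-x$.

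\emph{The values at $0$ and $1$.} Since every $p_i>0$, we get $f(0)=\prod_{i=1}^r p_i>0$. Since $p_i+q_i=1$, every factor equals $1$ at $x=1$, so $g(1)=1$ and $f(1)=0$.

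\emph{The first derivative at $1$.} Logarithmic differentiation gives
\[
 g'(x)=g(x)\sum_{i=1}^r\frac{q_i}{p_i+q_ix},
\]
which is valid for $x>0$ because every factor is positive there. At $x=1$ this becomes $g'(1)=\sum_{i=1}^r q_i$. Hence
\[
 f'(1)=\sum_{i=1}^r q_i-1.
\]
Each $q_i>\frac1r$ because $p_i<\frac{r-1}r$, so $\sum_i q_i>1$ and $f'(1)>0$. This is the only place the hypothesis $p_i<1-1/r$ is used.

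\emph{Convexity on $x>0$.} We have $f''=g''$, and expanding the product gives
\[
 g''(x)=\sum_{i\neq j} q_iq_j\prod_{k\neq i,j}(p_k+q_kx).
\]
For $x>0$ every factor $p_k+q_kx$ is positive and every $q_i$ is positive. Since $r\geq 2$, the sum has at least one term, so $g''(x)>0$.

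There is no real obstacle here. The only point needing care is the second derivative, where $r\geq2$ guarantees a nonzero term and positivity of all factors gives strictness. All of this sets up the unique root $\alpha_i$ argument later, presumably with $f$ specialized or the roots $\alpha_i$ compared via convexity.
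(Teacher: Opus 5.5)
Your proof is correct and follows essentially the same route as the paper. You evaluate $f(0)$ and $f(1)$ directly, get $f'(1)=\sum_i q_i-1>0$ from $q_i>1/r$, and write $f''$ as a sum over ordered pairs $i\neq j$ of positive products; the only cosmetic difference is that you obtain $f'$ by logarithmic differentiation rather than the product rule.
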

\begin{proof}
We have $f(0)=\prod_{i=1}^rp_i>0$, $f(1)=-1+\prod_{i=1}^r(p_i+q_i)=0$,
and $f'(x)=-1+\sum_{i=1}^rq_i\prod_{j\neq i}(p_j+q_jx)$.
Since $q_i>\frac1r$, it follows that $f'(1)=-1+\sum_{i=1}^rq_i>0$.
Also, we have $f''(x)=\sum_{i=1}^r q_i\sum_{j\neq i}q_j\prod_{k\neq i,j}(p_k+q_kx)>0$ for $x>0$.
\end{proof}
Using Lemma~\ref{lemma3}, the graph of $y=f(x)$ yields the following.
\begin{lemma}\label{lemma4}
There is a unique $\beta\in(0,1)$ such that $f(\beta)=0$.
If $0<\alpha<1$ and $f(\alpha)\leq0$, then $\beta\leq\alpha$.
\end{lemma}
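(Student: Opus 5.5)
The plan is to read everything off Lemma~\ref{lemma3} using convexity; only elementary real analysis is needed.

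\emph{Existence.} Since $f(1)=0$ and $f'(1)>0$, there is $\delta>0$ with $f(x)<0$ for $x\in(1-\delta,1)$. Fix such an $x_0$. We have $f(0)>0$, $f(x_0)<0$, and $f$ is continuous (it is a polynomial). The intermediate value theorem then gives a root $\beta\in(0,x_0)\subset(0,1)$.

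\emph{Uniqueness.} Suppose $0<\beta_1<\beta_2<1$ are two roots. Together with $x=1$, the strictly convex function $f$ on $(0,\infty)$ (by $f''>0$) would vanish at three points $\beta_1<\beta_2<1$. That is impossible: strict convexity forces $f(\beta_2)<\max\{f(\beta_1),f(1)\}$, and in fact forces $f(\beta_2)<0$, since $\beta_2$ is a proper convex combination of $\beta_1$ and $1$. Hence the root in $(0,1)$ is unique.

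\emph{Comparison.} Let $0<\alpha<1$ with $f(\alpha)\le 0$, and suppose for contradiction that $\alpha<\beta$.
\begin{itemize}
\item If $f(\alpha)<0$: the intermediate value theorem on $[0,\alpha]$, using $f(0)>0$, produces a root in $(0,\alpha)$. This root is distinct from $\beta$, contradicting uniqueness.
\item If $f(\alpha)=0$: then $\alpha$ is itself a root in $(0,1)$, so $\alpha=\beta$ by uniqueness, again a contradiction.
\end{itemize}
Therefore $\beta\le\alpha$.

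Equivalently, $f$ is positive on $(0,\beta)$ and negative on $(\beta,1)$. Positivity on $(0,\beta)$ follows from $f(0)>0$ and the absence of roots there. Negativity on $(\beta,1)$ follows from convexity between the zeros $\beta$ and $1$.

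The only mildly delicate point is uniqueness, which rests on strict convexity on $(0,1]$ together with the known root at $1$. The condition $f'(1)>0$, which uses $q_i>1/r$, is exactly what guarantees that a root strictly inside $(0,1)$ exists at all.
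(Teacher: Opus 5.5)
Your proof is correct and follows essentially the same approach as the paper, which just says the lemma is read off from the graph of $y=f(x)$ using Lemma~\ref{lemma3}. You have written that graph argument out rigorously: the intermediate value theorem between $0$ and a point just left of $1$ gives existence, strict convexity together with the root at $1$ gives uniqueness, and the intermediate value theorem on $[0,\alpha]$ gives the comparison.
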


We mention that $g(x):=-x+p_i+q_ix^r$ satisfies $g(0)>0$, $g(1)=0$,
$g'(1)>0$, and $g''(x)>0$ for $x>0$. Thus, there is a unique
$\alpha_i\in(0,1)$ such that $g(\alpha_i)=0$.
Since $-\alpha_i+p_i+(1-p_i)\alpha_i^r=0$, it follows that
\[
p_i=\frac{\alpha_i-\alpha_i^r}{1-\alpha_i^r}, \quad
q_i=\frac{1-\alpha_i}{1-\alpha_i^r}.
\]
Let $\alpha=\alpha_1\cdots\alpha_r\in(0,1)$. Then we have
\begin{align*}
&q_i(\alpha_i+\cdots+\alpha_i^r)=\alpha_i,\\
&q_i(\alpha_i+\cdots+\alpha_i^{r-1}+\alpha)=p_i+q_i\alpha.
\end{align*}

The following result was stated in \cite{T2024} as Theorem~10,
but the proof of Claim 1 in that paper contains a correctable error.
We will give a slightly different self-contained proof in the next section.
\begin{proposition}\label{prop}
We have $\prod_{i=1}^r\mu_{p_i}(\FF_i)\leq\beta^t$.
\end{proposition}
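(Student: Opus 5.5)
We prove the proposition by induction on $n$, with the base cases $n=t$ and small $n$ handled directly. The idea is to split each family by whether it contains the element $n$ and to use the convexity of $f$ from Lemmas~\ref{lemma3} and~\ref{lemma4} to control the recursion. First we reduce to up-closed families. Replacing each $\FF_i$ by its up-closure $\{G\subset[n]: G\supset F \text{ for some } F\in\FF_i\}$ keeps the families $r$-cross $t$-intersecting and does not decrease any $\mu_{p_i}$. So we may assume every $\FF_i$ is an up-set.

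For the induction we define, for $i=1,\ldots,r$,
\[
\FF_i(0)=\{F\in\FF_i: n\notin F\}\subset 2^{[n-1]},\qquad
\FF_i(1)=\{F\setminus\{n\}: n\in F\in\FF_i\}\subset 2^{[n-1]}.
\]
Then $\mu_{p_i}(\FF_i)=p_i\mu_{p_i}(\FF_i(1))+q_i\mu_{p_i}(\FF_i(0))$, with measures on $2^{[n-1]}$. Since the families are up-sets, $\FF_i(0)\subset\FF_i(1)$. Pick any $\epsilon\in\{0,1\}^r$ and form $(\FF_1(\epsilon_1),\ldots,\FF_r(\epsilon_r))$. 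If $\epsilon=(1,\ldots,1)$, these families are $r$-cross $(t-1)$-intersecting. Otherwise they are $r$-cross $t$-intersecting, since the common intersection cannot use $n$.

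The natural induction hypothesis is then too weak, because expanding the product gives $2^r$ mixed terms, each bounded only by $\beta^t$ or $\beta^{t-1}$. Instead we carry the stronger statement
\[
\prod_i\mu_{p_i}(\FF_i)\le\beta^t \quad\text{for all $r$-cross $t$-intersecting up-sets, all } t\ge 0,
\]
with $\beta^0=1$. To control the mixed terms we use the monotonicity $\FF_i(0)\subset\FF_i(1)$. Write $x_i=\mu_{p_i}(\FF_i(1))$ and $y_i=\mu_{p_i}(\FF_i(0))\le x_i$. The product $\prod_i(p_ix_i+q_iy_i)$ must then be bounded using only two pieces of information: $\prod_i x_i\le\beta^{t-1}$, and, for the all-zero choice, $\prod_i y_i\le\beta^t$, together with the bounds for the mixed choices.

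The cleanest route is the following. Set $y_i=s_ix_i$ with $s_i\in[0,1]$. Then
\[
\prod_i(p_ix_i+q_iy_i)=\prod_i x_i\prod_i(p_i+q_is_i).
\]
For the second factor, the mixed families $(\FF_1(0),\FF_2(1),\ldots)$ and so on give constraints, but the most useful one is $\prod_i y_i=\prod_i x_i\prod_i s_i\le\beta^t$. If $\prod_i x_i\le\beta^t$ already, we are done since $p_i+q_is_i\le1$. Otherwise set $s=\prod_i s_i$, so that $s\le\beta^t/\prod_i x_i$.

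The key step, and the main obstacle, is the bound $\prod_i(p_i+q_is_i)\le$ something that closes the recursion. By AM-GM-type reasoning (log-concavity), $\prod_i(p_i+q_is_i)$ subject to $\prod_i s_i=s$ is not simply bounded by $\prod_i(p_i+q_is^{1/r})$, because the $p_i$ differ. I would therefore first try to prove a weighted bound for the whole induction: $\prod_i\mu_{p_i}(\FF_i)\le\beta^t$ combined with a product inequality that shows $\prod_i(p_i+q_i\beta)=\beta$ (that is, $f(\beta)=0$) is the extremal configuration. Concretely, we would show that $h(x,s)=x\prod_i(p_i+q_is_i)$, maximized over $x\le\beta^{t-1}$ and $x\prod_i s_i\le\beta^t$, is at most $\beta^t$, with the maximum attained when $x=\beta^{t-1}$ and each $s_i=\beta$. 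This gives $\beta^{t-1}f(\beta)+\beta^t=\beta^t$. Proving this optimization claim, possibly via Lagrange multipliers together with the convexity from Lemma~\ref{lemma3}, is where I expect the difficulty to lie. If it fails for unequal $s_i$, the fallback is to use all $2^r$ mixed constraints and handle the different coordinates one at a time by shifting.
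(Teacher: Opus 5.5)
\textbf{There is a genuine gap: the key step is false as you formulate it, so the proposal does not yet prove the proposition.} Your framework is sound: induction on $n$ for all $t\ge 0$, passing to up-closures, and splitting at the element $n$.

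Keep only the two constraints $x\le\beta^{t-1}$ and $x\prod_i s_i\le\beta^t$, and take $x=\beta^{t-1}$, $s_1=\beta$, $s_2=\cdots=s_r=1$. Both constraints hold, yet $h(x,s)=\beta^{t-1}(p_1+q_1\beta)>\beta^t$, because $p_1+q_1\beta-\beta=p_1(1-\beta)>0$.

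Two further signs that this optimization is wrong:
\begin{itemize}
\item Your proposed maximizer $s_i=\beta$ does not even saturate $\prod_i s_i\le\beta$ when $r\ge 2$.
\item The AM--GM intuition points the wrong way. With $v_i=\log s_i$, each $\log(p_i+q_ie^{v_i})$ is convex. So under the product constraint the maximum is at a vertex (one $s_j=\beta$, all others $1$), with value $\beta^{t-1}\max_j(p_j+q_j\beta)>\beta^t$.
\end{itemize}
So the claim cannot be proved by Lagrange multipliers or any other method; the two constraints you kept are simply too weak.

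\textbf{The missing idea} is to use instead the $r$ mixed choices with exactly one $\epsilon_i=0$. You already observed these are $r$-cross $t$-intersecting on $[n-1]$, so they give $y_i\prod_{j\ne i}x_j\le\beta^t$, i.e.\ $s_i\le\beta^t/X$ with $X=\prod_jx_j$. If some $x_i=0$, then $y_i=0$ by up-closure and there is nothing to prove. If $X\le\beta^t$ you are done, as you note. Otherwise $\sigma:=\beta^t/X\in[\beta,1)$ because $X\le\beta^{t-1}$, every $s_i\le\sigma$, and
\[
\prod_{i=1}^r\mu_{p_i}(\FF_i)\le X\prod_{i=1}^r(p_i+q_i\sigma)=\frac{\beta^t}{\sigma}\bigl(f(\sigma)+\sigma\bigr)\le\beta^t .
\]
The last step uses $f\le 0$ on $[\beta,1]$, which follows from the convexity in Lemma~\ref{lemma3} together with $f(\beta)=f(1)=0$. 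With this replacement your induction closes, and the all-zero constraint is never used.

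\textbf{Comparison with the paper.} The repaired argument is a genuinely different and more elementary route. The paper shifts the families and converts $r$-cross $t$-intersection into the $(r,t)$-excess condition (Lemma~\ref{lemma7}). It then bounds the probability that a random walk with step generating function $\prod_i(p_i+q_iz)$ ever reaches height $t$, via the recursion $h_{M+1}(d)=\sum_jc_jh_M(d-1+j)$. That recursion needs only the identity $f(\beta)=0$, whereas your induction uses the sign of $f$ on all of $[\beta,1]$ but avoids shifting altogether.
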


Our goal is to show that $\prod_{i=1}^r\mu_{p_i}(\FF_i)\leq\alpha^t$.
To this end, by Proposition~\ref{prop}, it suffices to show $\beta\leq\alpha$,
and by Lemma~\ref{lemma4}, it suffices to show the following.

\begin{lemma}\label{lemma6}
We have $f(\alpha)\leq 0$.
\end{lemma}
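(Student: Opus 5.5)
We need to show $f(\alpha)=-\alpha+\prod_{i=1}^r(p_i+q_i\alpha)\le 0$. We will use the two identities just derived,
\[
q_i(\alpha_i+\cdots+\alpha_i^r)=\alpha_i,\qquad q_i(\alpha_i+\cdots+\alpha_i^{r-1}+\alpha)=p_i+q_i\alpha .
\]
Multiplying the second identity over $i=1,\ldots,r$ gives
\[
\prod_{i=1}^r(p_i+q_i\alpha)=\Bigl(\prod_{i=1}^r q_i\Bigr)\prod_{i=1}^r(\alpha_i+\alpha_i^2+\cdots+\alpha_i^{r-1}+\alpha).
\]
Multiplying the first identity over $i$ gives
\[
\alpha=\prod_{i=1}^r\alpha_i=\Bigl(\prod_{i=1}^r q_i\Bigr)\prod_{i=1}^r(\alpha_i+\alpha_i^2+\cdots+\alpha_i^{r}).
\]
Since $\prod q_i>0$, the claim $f(\alpha)\le 0$ is therefore equivalent to
\[
\prod_{i=1}^r(\alpha_i+\cdots+\alpha_i^{r-1}+\alpha)\le\prod_{i=1}^r(\alpha_i+\cdots+\alpha_i^r),
\]
where $\alpha=\alpha_1\cdots\alpha_r$ and each $\alpha_i\in(0,1)$.

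For $r\ge 3$ this is exactly Theorem~\ref{thm:1} applied with $a_i=\alpha_i$, so the lemma follows at once.

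The remaining case is $r=2$, which Theorem~\ref{thm:1} does not cover. Here the inequality reads
\[
(\alpha_1+\alpha_1\alpha_2)(\alpha_2+\alpha_1\alpha_2)\le(\alpha_1+\alpha_1^2)(\alpha_2+\alpha_2^2).
\]
Dividing both sides by $\alpha_1\alpha_2>0$ reduces it to $(1+\alpha_2)(1+\alpha_1)\le(1+\alpha_1)(1+\alpha_2)$, which holds with equality. Alternatively, one can verify directly that for $r=2$ we have $\alpha_i=p_i/q_i$, so that $p_i+q_i\alpha=q_i(\alpha_i+\alpha)$, and the two products coincide.

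There is no real obstacle in this argument. The only care needed is to get the algebraic identities right and to handle $r=2$ separately.
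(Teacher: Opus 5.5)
Your proposal is correct and follows the paper's proof essentially verbatim. You multiply the two identities over $i$ and cancel $\prod_i q_i>0$, then invoke Theorem~\ref{thm:1} with $a_i=\alpha_i$ for $r\ge 3$, and for $r=2$ you check directly that both products equal $\alpha_1\alpha_2(1+\alpha_1)(1+\alpha_2)$, so that $f(\alpha)=0$.
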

\begin{proof}
This is a consequence of Theorem~\ref{thm:1}. 
If $r\geq 3$, then multiplying both sides of the inequality of 
Theorem~\ref{thm:1} by
$\prod_{i=1}^rq_i>0$, we obtain 
\begin{align*}
\alpha&=\prod_{i=1}^rq_i(\alpha_i+\cdots+\alpha_i^r)
\geq\prod_{i=1}^r q_i(\alpha_i+\cdots+\alpha_i^{r-1}+\alpha)
=\prod_{i=1}^r(p_i+q_i\alpha),
\end{align*}
which means that $f(\alpha)\leq 0$.
If $r=2$, then 
$(\alpha_1+\alpha_1^2)(\alpha_2+\alpha_2^2)=(\alpha_1+\alpha_1\alpha_2)
(\alpha_2+\alpha_1\alpha_2)$. Thus $f(\alpha)=0$.
\end{proof}

Combining Proposition~\ref{prop}, Lemma~\ref{lemma4}, and Lemma~\ref{lemma6}, we obtain
\[
 \prod_{i=1}^r\mu_{p_i}(\FF_i)\leq\beta^t\leq\alpha^t=(\alpha_1\cdots\alpha_r)^t.
\]
This completes the proof of Theorem~\ref{thm:2}, subject to Proposition~\ref{prop}, which is proved in the next section.

\section{Proof of Proposition~\ref{prop}}
We recall some basic facts concerning shifting operations and shifted intersecting families.
For $\FF\subset 2^{[n]}$ and $1\leq a<b\leq n$, define a shifting operation $S_{a,b}$ by
\[
 S_{a,b}(\FF):=\{s_{a,b,\FF}(F):F\in\FF\}\subset 2^{[n]},
\]
where $s_{a,b,\FF}(F)$ is defined as follows: if $F\cap\{a,b\}=\{b\}$ and
$F':=(F\setminus\{b\})\cup\{a\}\not\in\FF$ then $s_{a,b,\FF}(F):=F'$, otherwise 
$s_{a,b,\FF}(F):=F$. 
The map $s_{a,b,\FF}$ is a size-preserving bijection
from $\FF$ onto $S_{a,b}(\FF)$. Hence $\mu_p(\FF)=\mu_p(S_{a,b}(\FF))$.

Suppose that $\FF_1,\ldots,\FF_r\subset 2^{[n]}$ are $r$-cross $t$-intersecting.
Let $\GG_i:=S_{a,b}(\FF_i)$ for $i\in[r]$. Then, $\GG_1,\ldots,\GG_r$ are
$r$-cross $t$-intersecting as well. To see this, suppose the contrary, and choose
$F_i\in\FF_i$ for $i\in[r]$ such that $|F_1\cap\cdots\cap F_r|=t$ and
$F_1\cap\cdots\cap F_r\cap\{a,b\}=\{b\}$, while 
$G_1\cap\cdots\cap G_r\cap\{a,b\}=\emptyset$, where $G_i=s_{a,b,\FF_i}(F_i)$. 
Without loss of generality, we may assume that $G_1\cap\{a,b\}=\{b\}$. This happens
because there is $F'\in\FF_1$ such that $F'=(F_1\setminus\{b\})\cup\{a\}$.
But then $F'\cap F_2\cap\cdots\cap F_r\cap\{a,b\}=\emptyset$
and $|F'\cap F_2\cap\cdots\cap F_r|<t$, a contradiction.

We say that a family $\FF\subset 2^{[n]}$ is shifted if $S_{a,b}(\FF)=\FF$ for all
$1\leq a<b\leq n$.
For given families $\FF_1,\ldots,\FF_r\subset 2^{[n]}$, 
we can make all of them shifted by applying finitely many simultaneous shifting 
operations. Indeed, whenever a simultaneous shift changes at least one family, 
the nonnegative integer $\sum_{i=1}^r\sum_{F\in\FF_i}\sum_{j\in F}j$ strictly decreases.

We say that families $\AA_1,\ldots,\AA_r\subset 2^{[n]}$ satisfy the $(r,t)$-excess 
condition if for every $A_i\in\AA_i$ ($i\in[r]$), there exists $k\in[n]$ such that
\[
 \sum_{i=1}^r|A_i\cap[k]|-(r-1)k\geq t.
\]
\begin{lemma}\label{lemma7}
Let $\AA_1,\ldots,\AA_r\subset 2^{[n]}$ be shifted families. Then, they satisfy
the $(r,t)$-excess condition if and only if they are $r$-cross $t$-intersecting.
\end{lemma}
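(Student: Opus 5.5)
We prove the two implications separately. Shiftedness is needed only for the ``only if'' direction.

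\emph{Excess condition implies $r$-cross $t$-intersecting.} This holds for arbitrary families. Take $A_i\in\AA_i$ and choose $k$ as in the excess condition. Each $j\in[k]$ that is missing from $A_1\cap\cdots\cap A_r$ is missing from at least one $A_i$. Hence
\[
\sum_{i=1}^r|A_i\cap[k]|\leq (r-1)k+|A_1\cap\cdots\cap A_r\cap[k]|.
\]
The excess condition then gives $|A_1\cap\cdots\cap A_r|\geq|A_1\cap\cdots\cap A_r\cap[k]|\geq t$.

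\emph{$r$-cross $t$-intersecting implies the excess condition, for shifted families.} We argue by contraposition. Suppose $A_i\in\AA_i$ satisfy
\[
\sum_i|A_i\cap[k]|-(r-1)k\leq t-1 \quad\text{for every } k\in[n].
\]
From these sets we will build $B_i\in\AA_i$ with $|B_1\cap\cdots\cap B_r|\leq t-1$. The tool is shiftedness: if $A\in\AA$ is shifted, and $B$ is obtained from $A$ by replacing elements with smaller ones while keeping $|B|=|A|$ (so $B$ is coordinatewise dominated by $A$), then $B\in\AA$. This follows by iterating single shifts $b\mapsto a$ with $a<b$.

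The construction scans $j=1,2,\ldots,n$ and assigns each element $j$ to the families whose sets will contain it. At each $j$ we want to place $j$ in as many $B_i$ as possible while leaving it out of at least one. The exception is that we may allow at most $t-1$ indices $j$ to lie in all $B_i$.

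Concretely, each $B_i$ must have exactly $|A_i|$ elements. It must also satisfy $|B_i\cap[k]|\geq|A_i\cap[k]|$ for all $k$, which is the domination condition that guarantees $B_i\in\AA_i$. We use a greedy ``water-filling'' procedure. At step $j$, we put $j$ into the $r-1$ sets that currently have the largest remaining demand $|A_i|-|B_i\cap[j-1]|$, or into fewer sets if the demands are exhausted. Whenever some set would otherwise fall behind its prefix requirement, we add $j$ to all $r$ sets and count it as an intersection element.

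The main obstacle is to show that the greedy procedure uses at most $t-1$ full-intersection elements. We will compare prefix sums. Let $D(k)=\sum_i|A_i\cap[k]|$ be the total number of slots demanded by the prefix constraints up to $k$. Without full intersections, a prefix $[k]$ can host at most $(r-1)k$ slots, so each full intersection must be forced by some $k$ with $D(k)>(r-1)k+(\text{number of full intersections so far})$.

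The hypothesis $D(k)-(r-1)k\leq t-1$ for all $k$ is what bounds the number of forced full intersections by $t-1$. One needs a Hall/majorization-type argument that greedy balancing wastes no capacity: choosing the $r-1$ largest remaining deficits keeps the individual constraints feasible whenever the aggregate constraint holds. We expect this to be the core difficulty. We would handle it by an exchange argument, or by an induction on $n$ that removes element $1$ and reduces to families over $[2,n]$ with adjusted deficits.

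Once feasibility is established, the sets $B_i$ are dominated by the $A_i$, so $B_i\in\AA_i$, and $|B_1\cap\cdots\cap B_r|\leq t-1$. This contradicts $r$-cross $t$-intersection and completes the equivalence.
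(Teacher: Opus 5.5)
Your first direction is correct and is the same as the paper's use of \eqref{eq13}. The converse has a genuine gap. Its entire content is the claim that your procedure yields $B_i\in\AA_i$ with at most $t-1$ full-intersection positions, and you leave this unproved (``we expect this to be the core difficulty''). Moreover, the rule as you state it is false. Take $r=3$, $t=1$, $n=10$, $A_1=\{3,4,\ldots,10\}$ and $A_2=A_3=\{1,2\}$. Then $D(k)-2k$ equals $0$ for $k=1,2$ and equals $2-k<0$ for $k\ge 3$, so this is a witness, and indeed $A_1\cap A_2\cap A_3=\emptyset$. At step $j=1$ the remaining demands are $8,2,2$, so you put $1$ into $B_1$ and into only one of $B_2,B_3$. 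The other one falls behind its prefix requirement, so you add $1$ to all three sets. You already have $1>t-1$ full-intersection elements, although $D(1)=2=(r-1)\cdot 1$.

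So your claim that every full intersection is forced by some $k$ with $D(k)$ exceeding $(r-1)k$ plus the number of full intersections so far fails for this greedy. Ranking sets by total remaining demand wastes capacity on sets whose deadlines are late. A correct rule would have to rank sets by urgency (their next prefix deadline), and you would still need a real exchange argument to show it is optimal. That argument is exactly what is missing.

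The paper avoids any global construction; this is the idea you need. Among all witnesses, i.e.\ tuples $(A_1,\ldots,A_r)$ with $A_i\in\AA_i$ and $\sum_i|A_i\cap[k]|-(r-1)k<t$ for all $k$, choose one minimizing $|C|$, where $C=\bigcap_iA_i$. Cross $t$-intersection gives $|C|\ge t$. Let $b$ be the $t$-th element of $C$. Counting gives $\sum_i|[b]\setminus A_i|>b-t=|\bigcup_i([b]\setminus A_i)|$, so some $a<b$ is missing from at least two of the sets, say $A_1$ and $A_2$. Replace $A_1$ by $(A_1\setminus\{b\})\cup\{a\}\in\AA_1$. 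The intersection loses $b$ and does not gain $a$, since $a\notin A_2$. The excess at each $k\ge b$ is unchanged. For $k<b$ the excess is at most $|[k]\cap C|\le t-1$ by \eqref{eq13}. Hence the new tuple is a witness with smaller intersection, a contradiction. The key point is that choosing $b$ as the $t$-th element of the intersection makes every prefix $k<b$ automatically safe, so a single local shift suffices and no scheduling or majorization analysis is needed.
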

For the proof, we use the following simple fact. For any subsets 
$B_1,\ldots,B_r\subset[n]$ and $k\in[n]$, it follows that
\begin{align}\label{eq13}
 \sum_{i=1}^r|B_i\cap[k]|\leq (r-1)k+\left|[k]\cap\bigcap_{i=1}^rB_i\right|. 
\end{align}
Indeed, in the LHS, if $x\in[k]\cap\bigcap_{i=1}^rB_i$, then it is counted $r$ times,
otherwise $x$ is counted at most $r-1$ times.

\begin{proof}[Proof of Lemma~\ref{lemma7}]
First suppose that the families $\AA_i$ satisfy the $(r,t)$-excess condition. Then,
for every $A_i\in\AA_i$, there is $k\in[n]$ such that
\begin{align*}
 t&\leq\sum_{i=1}^r|A_i\cap[k]|-(r-1)k
\leq\left|[k]\cap \bigcap_{i=1}^rA_i\right|
\leq\left|\bigcap_{i=1}^rA_i\right|,
\end{align*}
where the second inequality follows from \eqref{eq13}.
This means that the families are $r$-cross $t$-intersecting.

Next suppose that the families are $r$-cross $t$-intersecting, but they do not
satisfy the $(r,t)$-excess condition. We call $(A_1,\ldots,A_r)$, where $A_i\in\AA_i$,
a witness if 
\[
 \sum_{i=1}^r|A_i\cap[k]|-(r-1)k<t 
\]
for all $k\in[n]$. Among witnesses, we choose one so that $|C|$ is minimal, 
where $C:=\bigcap_{i=1}^rA_i$.
Since the families are $r$-cross $t$-intersecting, it follows that $|C|\geq t$, and
let $b$ be the $t$-th element of $C$, that is, $|C\cap[b]|=t$.
Then, 
$\sum_{i=1}^r|[b]\setminus A_i|=b-\left(\sum_{i=1}^r|A_i\cap[b]|-(r-1)b\right)
>b-t=\left|\bigcup_{i=1}^r([b]\setminus A_i)\right|$, and so
there is $a<b$ such that at most $r-2$ of $A_1,\ldots,A_r$ contain $a$.
Without loss of generality, we may assume that $a\not\in A_1\cup A_2$. 
By the shiftedness of the families, we have $B_1:=(A_1\setminus\{b\})\cup\{a\}\in\AA_1$,
and let $B_i:=A_i$ for $2\leq i\leq r$. Let $C'=\bigcap_{i=1}^r B_i=C\setminus\{b\}$.
We show that $(B_1,\ldots,B_r)$ is also a witness.
If $k<b$, then, by \eqref{eq13}, 
$\sum_{i=1}^r|B_i\cap[k]|-(r-1)k\leq|[k]\cap C'|=|[k]\cap C|<|[b]\cap C|=t$.
If $k\geq b$, then, 
$\sum_{i=1}^r|B_i\cap[k]|-(r-1)k=\sum_{i=1}^r|A_i\cap[k]|-(r-1)k<t$.
Therefore, $(B_1,\ldots,B_r)$ is a witness, but $|C'|<|C|$ contradicts the minimality
of $|C|$.
\end{proof}

\begin{proof}[Proof of Proposition~\ref{prop}]
Let $\FF_1,\ldots,\FF_r\subset 2^{[n]}$ be $r$-cross $t$-intersecting families. 
If one of the families is empty, the assertion is immediate.
Otherwise, simultaneous shifting and Lemma~\ref{lemma7} give shifted
families $\AA_1,\ldots,\AA_r\subset 2^{[n]}$ satisfying the $(r,t)$-excess condition 
and $\mu_{p_i}(\AA_i)=\mu_{p_i}(\FF_i)$ for every $i\in[r]$.
Let $q_i:=1-p_i$ and 
\[
 \phi(z):=\prod_{i=1}^r(p_i+q_iz)=\sum_{j=0}^rc_jz^j.
\]
Let $\beta\in(0,1)$ be defined by $\phi(\beta)=\beta$.

Let $X_{i,k}\in\{0,1\}$ ($i\in[r]$, $k\in[n]$) be mutually independent random variables
with $\P[X_{i,k}=1]=p_i$.
For every $i\in[r]$, sample a random set $Z_i=\{k\in[n]:X_{i,k}=1\}$. Then 
$\P[Z_i\in\AA_i]=\mu_{p_i}(\AA_i)$, and
\[
 \P[Z_i\in\AA_i\text{ for every }i\in[r]]=\prod_{i=1}^r\mu_{p_i}(\AA_i).
\]
Define independent and identically distributed random variables $Y_k$ for $k\in[n]$ by
\[
 Y_k:=\sum_{i=1}^r(1-X_{i,k}), 
\]
and set $S_0:=0$, $S_k:=\sum_{j=1}^k(1-Y_j)$ for $k\in[n]$. 
Note that 
\[
Y_k=\#\{i\in[r]:k\not\in Z_i\}=r-\#\{i\in[r]:k\in Z_i\}, 
\]
and 
\[
 S_k=\sum_{j=1}^k(1-r+\#\{i\in[r]:j\in Z_i\})=\sum_{i=1}^r|Z_i\cap[k]|-(r-1)k.
\] 
Since $\AA_1,\ldots,\AA_r$ satisfy the $(r,t)$-excess condition, it follows that
\[
 \prod_{i=1}^r\mu_{p_i}(\AA_i)=\P[Z_i\in\AA_i\text{ for every }i]\leq\P[\max_{0\leq k\leq n} S_k\geq t].
\]
Note also that $X_{i,k}$ are independent and
\[
 \E[z^{Y_k}]=\E[z^{\sum_{i=1}^r(1-X_{i,k})}]=\prod_{i=1}^r\E[z^{1-X_{i,k}}]
=\prod_{i=1}^r(p_i+q_iz)=\phi(z).
\]
Thus, $\E[z^{Y_k}]=\phi(z)=\sum_{j=0}^rc_jz^j$, and so 
\[
c_j=\P[Y_k=j]. 
\]

For integers $0\leq M\leq n$ and $d\geq 0$, define
\[
 h_M(d):=\P[\max_{0\leq k\leq M}S_k\geq d].
\]
We have $h_M(0)=1$ and $h_0(d)=0$ for $d\geq 1$.
We claim that, for $d\geq 1$, 
\begin{align}\label{eq14}
 h_{M+1}(d)=\sum_{j=0}^rc_jh_M(d-1+j).
\end{align}
For $0\leq M<n$, let $\tilde S_k:=S_{k+1}-S_1=\sum_{l=2}^{k+1}(1-Y_l)$.
Since $Y_1,\ldots,Y_{M+1}$ are i.i.d.\ variables, 
$(\tilde S_0,\ldots,\tilde S_M)$ has the same
distribution as $(S_0,\ldots,S_M)$, and is independent of $Y_1$.

Conditioning on $Y_1=j$, we have
\begin{align}\label{eq15}
 h_{M+1}(d)=\sum_{j=0}^r\P[Y_1=j]\,
\P[\max_{0\leq k\leq M+1}S_k\geq d\mid Y_1=j].
\end{align}
For $Y_1=j$, we have
\[
 \max_{1\leq k\leq M+1}S_k=(1-j)+\max_{0\leq k\leq M}\tilde S_k.
\]
Noting that $S_0=0<d$, and $d-1+j\geq 0$, it follows that 
\begin{align}\label{eq16}
 \P[\max_{0\leq k\leq M+1}S_k\geq d\mid Y_1=j]
= \P[\max_{0\leq k\leq M}\tilde S_k\geq d-1+j\mid Y_1=j]
=h_M(d-1+j).
\end{align}
Then \eqref{eq14} follows from \eqref{eq15}, \eqref{eq16}, and $\P[Y_1=j]=c_j$.

Finally we show that
\begin{align}\label{eq17}
h_M(d)\leq \beta^d  
\end{align}
by induction on $M$, simultaneously for all integers $d\geq0$.
The assertion is immediate for $M=0$ and for $d=0$.
For $d\geq1$, \eqref{eq14} and the induction hypothesis give
\begin{align*}
h_{M+1}(d) 
&=\sum_{j=0}^r c_jh_M(d-1+j)
\leq\sum_{j=0}^r c_j \beta^{d-1+j}
=\beta^{d-1}\sum_{j=0}^r c_j \beta^{j}
=\beta^{d}.
\end{align*}
Setting $M=n$ and $d=t$ in \eqref{eq17}, we conclude that
\[
 \prod_{i=1}^r\mu_{p_i}(\FF_i)
 =\prod_{i=1}^r\mu_{p_i}(\AA_i)
 \leq h_n(t)\leq\beta^t.
\]
This completes the proof of Proposition~\ref{prop}.
\end{proof}

\subsection*{The role of AI in this paper}
AI assistance (GPT-5.5, GPT-5.6, and GPT-6) was used as a research support tool for 
exploring possible proofs.
After this exploratory stage, all mathematical arguments were independently 
verified and refined by the authors. 
The final manuscript was written by the authors.

\subsection*{Acknowledgments}
NT was supported by JSPS KAKENHI Grant Number JP23K03201.

\end{document}